\documentclass[11pt]{article}

\usepackage{fullpage}
\usepackage{amsmath, amsthm, amsfonts, amssymb, amstext, mathrsfs, enumerate}
\usepackage{graphicx, ragged2e, lscape, framed, xcolor}
\usepackage{subfiles}

\theoremstyle{plain}
\newtheorem{theorem}{Theorem}[section]

\newtheorem{proposition}[theorem]{Proposition}
\newtheorem{conjecture}[theorem]{Conjecture}

\newtheorem{problem}[theorem]{Problem}
\newtheorem{claim}{Claim}[section]

\numberwithin{equation}{section}
\allowdisplaybreaks

\usepackage[pagebackref]{hyperref}
\hypersetup{
	colorlinks=true,
    urlcolor=purple,
	linkcolor=purple,
    citecolor=purple,
}

\DeclareMathOperator{\spec}{spec}
\DeclareMathOperator{\supp}{supp}

\def\b{\mbox{\boldmath $b$}}
\def\s{\mbox{\boldmath $s$}}

\def\x{\mbox{\boldmath $x$}}
\def\y{\mbox{\boldmath $y$}}

\def\1{\mbox{\boldmath $1$}}
\def\0{\mbox{\boldmath $0$}}

\newcommand{\affl}[3]{\noindent #1, Email: {\tt #2}\\ \textsc{#3}\\[1pt]}

\title{\textbf{The switching conjecture for main eigenvalues is asymptotically true}}
\author{Saieed Akbari \and Hitesh Kumar \and Bojan Mohar \and Shivaramakrishna Pragada}
\date{}

\begin{document}
\maketitle
\begin{abstract} An eigenvalue of a signed graph is called \emph{main} if there exists a corresponding eigenvector non-orthogonal to the all-ones vector. An important result of O'Rourke and Touri (2016) states that almost all (unsigned) graphs have all main eigenvalues. Akbari, Fran\c{c}a, Ghasemian, Javarsineh, and de Lima (2021) considered main eigenvalues of signed graphs and conjectured that for any unsigned connected graph $G \notin\{ K_2, K_4 - e\}$, there is a switching $\s$ such that all eigenvalues of the signed graph $G^{\s}$ are main.

We prove two incomparable asymptotic versions of this conjecture. We show that for any graph $G$ of order $n$, there exists a switching
$\s\in\{\pm1\}^n$ such that $G^{\s}$ has $n - O\!\left(\frac{n}{(\log n)^{1/4}}\right)$ main eigenvalues counted with multiplicity. Using a similar proof strategy, we also show that if $G$ has $d$ distinct eigenvalues, then there exists a switching $\s\in\{\pm1\}^n$ such that $G^{\s}$ has $d - O\!\left(\frac{d}{(\log d)^{1/4}}\right)$ main eigenvalues.
\end{abstract}

\noindent
\textbf{Keywords:} Signed graphs; Main eigenvalues; Controllability matrix; Walk matrix; 

\noindent
\textbf{MSC2020:} 05C50, 05C22, 15A18

\section{Introduction}

\subsection{Notation and terminology}

Throughout, assume that all (signed) graphs are of order $n\ge 5$ unless stated otherwise. A \emph{signed graph} $\Gamma$ is a pair $(G, \sigma)$, where $G =(V(G), E(G))$ is a simple graph, called the \emph{underlying unsigned graph}, and $\sigma:E(G)\rightarrow\{-1, +1\}$ is the \emph{signature}. The \emph{adjacency matrix} of $\Gamma$, denoted by $A(\Gamma)$, is defined as
\[A(\Gamma)_{uv}=
\begin{cases}
    \sigma(uv)&\text{ if } uv\in E(G);\\
    0&\text{ otherwise.}
\end{cases}\]
An unsigned graph $G$ is taken to be equal to the signed graph $(G, 1)$ where $1$ is the signature that assigns $1$ to all edges of $G$. For a signed graph $\Gamma$ with $V(\Gamma) = \{1, \ldots, n\}$, a function $\s: V(\Gamma) \to \{1 ,-1\}$ is called a \textit{switching}. Equivalently, a switching is a Rademacher vector $\s = (s_1, \ldots, s_n)\in \{\pm 1\}^n$. Two signed graphs $\Gamma_1 = (G, \sigma_1)$ and $\Gamma_2 = (G, \sigma_2)$ with the same underlying graph $G$ are \textit{switching equivalent}, written $\Gamma_1 \sim \Gamma_2$, if there is a switching $\s$ such that \[\sigma_2(ij)=s_i\, \sigma_1(ij)\, s_j.\]
This is equivalent to
\begin{equation}\label{eq:switching_equivalence}
    A(\Gamma_2) = D_{\s}\, A(\Gamma_1)\, D_{\s},
\end{equation}
where $D_{\s}$ is the diagonal matrix whose diagonal vector is $\s$. Using \eqref{eq:switching_equivalence} it is clear that $\lambda$ is an eigenvalue of $A(\Gamma_1)$ with eigenvector $\x$ if and only if $\lambda$ is an eigenvalue of $A(\Gamma_2)$ with eigenvector $D_{\s} \x$. In particular, switching equivalent signed graphs have the same spectrum. For further details on signed graphs, refer to \cite{Harary_1955, Zaslavsky_1982, Zaslavsky_1998}. For an unsigned graph $G$ and a switching $\s$, we denote by $G^{\s}$\ the signed graph with signature $\sigma$ given by
\[ \sigma(ij) = s_is_j\]
whenever $ij\in E(G)$. For a subset $U\subseteq V(G)$, we will denote by $\s_U$ the switching whose entries are given by
\[ s_v: = 
\begin{cases}
-1 & v\in  U;\\
1 & v\notin U.
\end{cases}\]
If $U=\{v\}$ (resp. if $U = \{u,v\}$), then we write $\s_v$ (resp. $\s_{u,v}$) instead of $\s_U$.
    
Consider a signed graph $\Gamma = (G, \sigma)$ with adjacency matrix $A(\Gamma)$. Since $A(\Gamma)$ is a real symmetric matrix, the eigenvalues of $A(\Gamma)$ (or simply $\Gamma$) are real which we denote as:
\[ \lambda_1(\Gamma)\ge \cdots \ge \lambda_n(\Gamma).\]
For an eigenvalue $\lambda$ of $\Gamma$, we denote by $E_{\lambda}$ the \emph{eigenspace} spanned by the eigenvectors of $A(\Gamma)$ corresponding to $\lambda$. We will denote by $\spec(\Gamma)$ the set of \emph{distinct} eigenvalues of $\Gamma$.  

Throughout, $\1$, $I$, and $\0$ will denote the all-ones vector, the identity matrix, and the zero vector of appropriate order. For two vectors $\x, \y\in \mathbb{R}^n$, we denote by $\x^\top\y$ the usual dot product of two vectors. 

Let $\x = (x_1, \ldots, x_n)\in \mathbb{R}^n$. The \emph{support} of $\x$ is defined to be the set $\supp(\x) := \{i: x_i\ne 0\}$. We say that the vector $\x$ is \emph{non-main} if $\1^\top\x = 0$ (equivalently, $\x \in \1^{\perp}$), and \emph{main} otherwise. An eigenvalue $\lambda$ of a signed graph $\Gamma$ is called \emph{non-main} if $E_{\lambda}\subseteq \1^{\perp}$ and \emph{main} otherwise. 

\subsection{Background and motivation}

For a graph $G$, its \emph{walk matrix} $W(G)$ is defined to be \[W(G) = [\ \1\ |\ A\1\ |\ \cdots\ |\ A^{n-1}\1\ ],\] 
where $A = A(G)$. The walk matrix is a special case of so-called \emph{controllability matrices} which are well studied in the control theory literature. There is a deep connection between the main eigenvalues and the rank of the walk matrix, see an early paper by Cvetkovi\'{c} \cite{Cvetkovic_1978}. It is known, for instance, that the number of distinct main eigenvalues of $G$ equals the rank of its walk-matrix $W(G)$ (see \cite{Teranishi_2001, Hagos_2002} and the survey \cite{Rowlinson_2007}). A graph $G$ of order $n$ is called \emph{controllable} if $W(G)$ has full rank, equivalently, if $G$ has $n$ distinct main eigenvalues. For applications of controllable graphs in Control Theory, see \cite{Kailath_1980, Ogata_2010, Rahmani_Ji_Mesbahi_Egerstedt_2009, Cvetkovic_Rowlinson_Stanic_Yoon_2014_multiagent}. Motivated by these applications, Cvetkovi\'{c}, Rowlinson, Stani\'{c}, and Yoon \cite{Cvetkovic_Rowlinson_Stanic_Yoon_2011, Cvetkovic_Rowlinson_Stanic_Yoon_2011_least_eigenvalue} (also see \cite{Stanic_2014}) investigated them in a more theoretical setting. 

Controllable graphs are interesting in the context of reconstructibility, see the results of Godsil and McKay \cite{Godsil_McKay_1981} (cf. \cite{He_2007}) and Yuan \cite{Yuan_1982}. They also tend to be good candidates for graphs determined by their spectra. An important result of Wang \cite{Wang_2017} shows that controllable graphs that satisfy a certain arithmetic condition are determined by their generalized spectrum. This has led to fruitful research, see the citations in and of \cite{Wang_2017}. Also, see \cite{Lou_Huang_Huang_2017} and its citations for some methods of constructing controllable graphs with certain properties. 

Godsil \cite{Godsil_2012} conjectured that almost all graphs are controllable. This was later confirmed by O'Rourke and Touri \cite{Rourke_Touri_2016}. Note that not all graphs are controllable; in fact, they can be very far from it. For instance, regular graphs have only one main eigenvalue. Stani\'{c} \cite{Stanic_2020} initiated the study of the main eigenvalues of signed graphs characterizing signed graphs with exactly one main eigenvalue. Also see the recent work on signed graphs with two distinct main eigenvalues \cite{Du_You_Liu_Yuan_2024, Du_You_Liu_2026}.

While not all graphs have all main eigenvalues, Akbari, Fran\c{c}a, Ghasemian, Javarsineh, and de Lima \cite{Akbari_Franca_Ghasemian_Javarsineh_deLima_2021} wondered: for a given graph $G$, does there always exist a switching $\s$ such that all (distinct) eigenvalues of $G^{\s}$ are main? After some initial investigation, they proposed the following conjecture.

\begin{conjecture}[Switching conjecture for main eigenvalues \cite{Akbari_Franca_Ghasemian_Javarsineh_deLima_2021}]\label{conj:switching_n_main_eigenvalues}
For any unsigned connected graph $G \notin\{ K_2, K_4 - e\}$, there is a switching $\s$ such that all (distinct) eigenvalues of $G^{\s}$ are main.
\end{conjecture}

Note that it suffices to work with \emph{distinct} eigenvalues. Indeed, if $\lambda$ is a main eigenvalue of $G^{\s}$, then one can always find a basis of its eigenspace $E_{\lambda}$ which only contains main $\lambda$-eigenvectors (see the proof of Proposition \ref{prop:equivalence}). In other words, if $\lambda$ is a main eigenvalue with multiplicity $m_{\lambda}$, then one can find $m_{\lambda}$ main $\lambda$-eigenvectors. Conjecture \ref{conj:switching_n_main_eigenvalues} is indeed equivalent to the following seemingly stronger (and unambiguous) assertion.

\begin{conjecture}\label{conj:switching_n_main_eigenvectors}
For any unsigned connected graph $G \notin\{ K_2, K_4 - e\}$ of order $n$, there is a  switching $\s$ such that $G^{\s}$ has an orthonormal eigenbasis consisting of main eigenvectors. 
\end{conjecture}    

For completeness, let us argue the equivalence of the above two conjectures.

\begin{proposition}\label{prop:equivalence}
Conjectures \ref{conj:switching_n_main_eigenvalues} and \ref{conj:switching_n_main_eigenvectors} are equivalent.
\end{proposition}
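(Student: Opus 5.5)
It suffices to prove, for a fixed signed graph $\Gamma = G^{\s}$ with adjacency matrix $A$, the following equivalence: every distinct eigenvalue of $\Gamma$ is main if and only if $\Gamma$ has an orthonormal eigenbasis consisting of main eigenvectors. Applied to the same switching $\s$, this gives the equivalence of Conjectures \ref{conj:switching_n_main_eigenvalues} and \ref{conj:switching_n_main_eigenvectors}, since both quantify over the same graphs $G$ and switchings $\s$.

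The reverse direction is immediate. If $\{\x_1,\dots,\x_n\}$ is an orthonormal eigenbasis of main eigenvectors, then each distinct eigenvalue $\lambda$ has some $\x_i$ in $E_\lambda$ with $\1^\top\x_i\neq 0$. Hence $E_\lambda\not\subseteq \1^\perp$, so $\lambda$ is main.

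For the forward direction, use the spectral decomposition $\mathbb{R}^n=\bigoplus_{\lambda\in\spec(\Gamma)} E_\lambda$ into mutually orthogonal eigenspaces. It then suffices to find, inside each $E_\lambda$ of dimension $m=m_\lambda$, an orthonormal basis of main vectors; their union is an orthonormal eigenbasis of $\Gamma$. Fix $\lambda$. Since $\lambda$ is main, the orthogonal projection $\mathbf{p}$ of $\1$ onto $E_\lambda$ is nonzero, and for $\x\in E_\lambda$ we have $\1^\top\x=\mathbf{p}^\top\x$. So we need an orthonormal basis of $E_\lambda$ none of whose vectors is orthogonal to $\mathbf{p}$. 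Take any orthonormal basis $\mathbf{u}_1,\dots,\mathbf{u}_m$ of $E_\lambda$ with $\mathbf{u}_1=\mathbf{p}/\|\mathbf{p}\|$ and let $U=[\mathbf{u}_1|\cdots|\mathbf{u}_m]$. Choose an $m\times m$ orthogonal matrix $Q$ whose first row has all entries nonzero; for instance $Q=I-\tfrac{2}{m}J$ for $m\ge 3$ (first row $(1-2/m,-2/m,\dots)$), the rotation by $\pi/4$ for $m=2$, and $Q=(1)$ for $m=1$. Then the columns $\x_j=U\mathbf{q}_j$ of $UQ$ form an orthonormal basis of $E_\lambda$, and $\mathbf{p}^\top\x_j=\|\mathbf{p}\|\,Q_{1j}\neq 0$ for every $j$.

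As an alternative to the explicit $Q$, one can argue generically. The set of orthogonal matrices with some vanishing first-row entry is a proper algebraic subset of $O(m)$, so a generic rotation of any orthonormal basis works.

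The only point needing any care is this construction of an orthogonal matrix with a nowhere-zero first row. The explicit Householder-type choice above settles it, and everything else is bookkeeping.
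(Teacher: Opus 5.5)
Your proof is correct and follows essentially the same route as the paper. Both arguments project $\1$ onto each eigenspace $E_\lambda$, extend the normalized projection to an orthonormal basis, and rotate by an orthogonal matrix whose first row has no zero entries. The only difference is cosmetic: the paper takes the first row to be $\frac{1}{\sqrt{m_\lambda}}(1,\ldots,1)$ and completes it to an orthogonal matrix, whereas you give the explicit choice $I-\frac{2}{m}J$ and handle $m\le 2$ separately.
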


\begin{proof}
We only need to argue that Conjecture \ref{conj:switching_n_main_eigenvalues} implies Conjecture \ref{conj:switching_n_main_eigenvectors}; the other direction is clear. 

Assume that there is a switching $\s$ such that all distinct eigenvalues of $G^{\s}$ are main. Since $A(G^{\s})$ is symmetric, the eigenspaces corresponding to distinct eigenvalues are orthogonal. So it suffices to find an orthogonal basis of main eigenvectors for $E_{\lambda}$ for any given eigenvalue $\lambda$ of $G^{\s}$.

Let $\x_1$ denote the normalized orthogonal projection of $\1$ on $E_{\lambda}$. Since $E_{\lambda}\not \subseteq \1^\perp$, we have $\x_1 \ne \0$. Extend $\x_1$ to an orthonormal basis $\x_1, \ldots, \x_{m_{\lambda}}$ of $E_{\lambda}$ where $m_{\lambda}=\dim E_\lambda$. Clearly, $\1^\top \x_i = 0$ for all $2\le i\le m_\lambda$. Now, take an $m_\lambda\times m_\lambda$ orthogonal matrix $Q$ whose first row is
\[ \frac{1}{\sqrt{m_\lambda}}(1, \ldots, 1).\]
Such a matrix exists by extending the above row to an orthonormal basis of $\mathbb{R}^{m_\lambda}$. Define 
\[ \y_{j} = \sum_{i=1}^{m_\lambda} Q_{ij} \x_{i}.\]
Since $Q$ is orthogonal, $\y_1, \ldots, \y_{m_\lambda}$ form an orthonormal basis for $E_{\lambda}$. Moreover, we see that
\[ \1^\top \y_j = Q_{1j} \1^\top \x_1 = \frac{1}{\sqrt{m_\lambda}}\1^\top \x_1 \neq 0. \]
Thus, $\y_1, \ldots, \y_{m_\lambda}$ are main eigenvectors corresponding to $\lambda$ for $G^{\s}$. This completes the proof.
\end{proof}

In \cite{Akbari_Franca_Ghasemian_Javarsineh_deLima_2021}, the authors verified this conjecture for distance-regular graphs, vertex-transitive graphs, and some special graphs, including all connected graphs on at most 9 vertices excluding $K_2$ and $K_4-e$. Also, see \cite{Stanic_2020, Shao_Yuan_2022} for verification of Conjecture \ref{conj:switching_n_main_eigenvalues} in special cases. We note that a similar problem was considered for the Laplacian matrix in \cite{Andelic_Koledin_Stanic_2023}. 

Finally, note that connectedness of $G$ is a necessary assumption in Conjecture \ref{conj:switching_n_main_eigenvalues}. It is easily seen that $K_2 \cup r K_1$ $(r\ge 1)$ has no switching, which makes all eigenvalues main. For another non-trivial example, consider $K_2 \cup P_m$, where $P_m$ is a path with $m \not \equiv 2 \mod 3$. Indeed, in this case $P_m$ does not have $1$ or $-1$ as an eigenvalue, and so the support of any main eigenvector for $\pm 1$ is contained in $V(K_2)$. Thus, no switching of $K_2\cup P_m$ makes both $1$ and $-1$ main.

\subsection{Our contribution}

Given an unsigned graph $G$ of order $n$ and a switching $\s\in \{\pm 1\}^n$, define
\[
M(\s) := \sum_{\lambda\in \spec(G)} m_\lambda \mathbf{1}_{\{E_\lambda\not\subseteq \s^\perp\}}\quad \text{ and }\quad 
M_*(\s) := \sum_{\lambda\in \spec(G)} \mathbf{1}_{\{E_\lambda\not\subseteq \s^\perp\}},
\]
where $E_\lambda$ denotes the $\lambda$-eigenspace of $A(G)$ and $m_\lambda = \dim(E_\lambda)$. Thus $M(\s)$ is the number of main eigenvalues of $G^{\s}$, counted with
multiplicity, and $M_*(\s)$ is the number of \emph{distinct} main eigenvalues of $G^{\s}$. 

Conjectures \ref{conj:switching_n_main_eigenvalues} and \ref{conj:switching_n_main_eigenvectors} are then equivalent to showing that for any unsigned connected graph $G \notin\{ K_2, K_4 - e\}$, there is a switching $\s$ such that $M(\s) = n$. Equivalently, for any unsigned connected graph $G \notin\{ K_2, K_4 - e\}$, there is a switching $\s$ such that $M_*(\s) = |\spec(G)|$.

We prove asymptotic versions of the switching conjecture in the following sense.

\begin{theorem}\label{thm:asymptotic}
Let $G$ be a graph of order $n$.  Then there exists a switching
$\s\in\{\pm1\}^n$ such that
\[
M(\s)\ge  n - O\!\left(\frac{n}{(\log n)^{1/4}}\right)= n(1-o(1)).
\]
\end{theorem}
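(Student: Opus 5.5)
Take $\s$ to be a uniformly random Rademacher vector and show that $\mathbb{E}[n-M(\s)] = O(n/(\log n)^{1/4})$; some $\s$ then does at least as well as the expectation. By linearity,
\[
\mathbb{E}[n-M(\s)] = \sum_{\lambda\in\spec(G)} m_\lambda \Pr\big(E_\lambda\subseteq \s^\perp\big).
\]
For each eigenspace $E=E_\lambda$ of dimension $m$, I will take an orthogonal projection $P$ onto $E$. Then $E\subseteq\s^\perp$ if and only if $P\s=\0$. The task is to bound $\Pr(P\s=\0)$ for each $\lambda$.

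The key input is an anti-concentration (Littlewood--Offord type) bound. Choose any unit vector $\x\in E$. Then $\Pr(P\s=\0)\le \Pr(\x^\top\s=0)$. If $\x$ has at least $k$ nonzero coordinates, the Erd\H{o}s form of Littlewood--Offord gives $\Pr(\x^\top \s=0)=O(k^{-1/2})$. So I need support sizes of eigenvectors. A subspace of dimension $m$ always contains a vector with support at least $m$: take a basis in reduced echelon form and sum the basis vectors with generic coefficients. Using several linearly independent directions gives something better. A Hal\'asz-type inequality bounds the probability that a random $\s$ lies in a codimension-$r$ subspace $E^\perp$ by roughly $O(r^{-1/2})$ under suitable nondegeneracy. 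The simplest committed route uses only the Erd\H{o}s bound and a threshold, as follows.

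Split the eigenvalues by multiplicity. If $m_\lambda\ge T$, then $E_\lambda$ contains a vector of support $\ge m_\lambda\ge T$, so $\Pr\le C T^{-1/2}$. The total contribution of these eigenvalues is at most $\sum m_\lambda \cdot C T^{-1/2}\le Cn/\sqrt T$.

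For eigenvalues with $m_\lambda<T$, the danger is eigenvectors of small support. I will bound the total multiplicity $N_k$ of eigenvalues all of whose eigenvectors have small support. If every vector in $E_\lambda$ has support at most $k$, then $E_\lambda$ lives on at most $k$ coordinates. Choose a vector of maximal support and note that the whole space is supported on a set $S_\lambda$ of size $\le k$. Otherwise a generic combination enlarges the support.

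Then count. An eigenvector $\x$ supported on $S$ satisfies $A\x=\lambda \x$, so the vertices outside $S$ adjacent to $S$ impose linear relations on $\x|_S$, namely $\x|_S\in\ker$ of the submatrix $A[N(S)\setminus S,S]$. I expect the main obstacle is here. Distinct eigenvalues could each have a tiny support, as in graphs with many pendant $K_2$'s or twins. Such components genuinely force non-main eigenvalues, for example twin vertices $u,v$ giving the eigenvector $e_u-e_v$ with eigenvalue $0$ or $-1$. Switching $\s_u$ fixes this, with probability $1/2$, so the Erd\H{o}s bound with $k=2$ only gives a constant. The remedy is to use that such small-support eigenspaces for a single $\lambda$ come in large families. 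For a fixed $\lambda$, the eigenvectors of support $\le k$ span $E_\lambda$ and decompose into many nearly disjoint pieces. The pieces have disjoint supports after a greedy selection, so the events $\x_i^\top\s=0$ are independent, and $\Pr(P\s=\0)\le 2^{-(\#\text{pieces})}$-ish. Concretely, I will show $E_\lambda$ contains $\gtrsim m_\lambda/k^2$ vectors with pairwise disjoint supports of size $\le k$. I will obtain them greedily: remove a support and the at most $k\cdot(\text{overlaps})$ vectors meeting it.

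Combining the three cases, $\Pr(E_\lambda\subseteq\s^\perp)$ is at most $\min\{(1-c)^{m_\lambda/k^2},\,Ck^{-1/2}\}$ depending on the regime. Summing and balancing the thresholds, with $T\approx\sqrt{\log n}$ and $k$ chosen so that the exponential term is $n^{-\Omega(1)}$ versus the $k^{-1/2}$ term, gives a total loss of $O(n/(\log n)^{1/4})$. The quarter power arises from square-rooting the Littlewood--Offord bound twice through this balancing. The delicate step I would attack first is the independent-pieces lemma for small-support eigenspaces, making precise the dichotomy between large support and many disjoint small supports.
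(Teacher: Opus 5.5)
There is a genuine gap in the regime you yourself flag as dangerous: eigenvalues with $m_\lambda<T$ whose eigenspace contains no vector of support $\ge k$. You say you will bound the total multiplicity $N_k$ of such eigenvalues, but you never do. By your own (correct) observation, such an $E_\lambda$ lives on a set $S_\lambda$ of at most $k$ coordinates. Hence $m_\lambda\le k$, and your independent-pieces lemma yields at most $m_\lambda/k^2\le 1/k$ pieces. The resulting bound $\min\{(1-c)^{m_\lambda/k^2},Ck^{-1/2}\}$ is then just a constant, and summing a constant over an uncontrolled number of eigenvalues gives nothing like $o(n)$. The lemma is also unsupported in general: nothing guarantees that the small-support vectors span $E_\lambda$, and the greedy step has no bound on the number of vectors meeting a given support.

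The missing idea is to count distinct eigenvalues, not to sharpen anti-concentration. If $\x\in E_\lambda$ is supported on $S$, then the rows of $A\x=\lambda\x$ indexed by $S$ give $A[S,S]\,\x|_S=\lambda\,\x|_S$. So $\lambda$ is an eigenvalue of the induced subgraph $G[S]$; you wrote down the constraints from the rows outside $S$ but missed this one.

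Consequently, every eigenvalue with some eigenvector of support at most $L$ is an eigenvalue of some graph on at most $L$ vertices. There are at most $\sum_{j\le L} j\,2^{\binom j2}\le L^2 2^{\binom L2}$ such values, independent of $n$. Your fear that many distinct eigenvalues could each have tiny support is therefore unfounded: twins, pendant edges, and similar structures pile multiplicity onto a few fixed values.

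Now use the elementary bound $\Pr(\s\perp E_\lambda)\le 2^{-m_\lambda}$, which holds because an $(n-m_\lambda)$-dimensional subspace contains at most $2^{n-m_\lambda}$ Rademacher vectors. Each such $\lambda$ then contributes $m_\lambda 2^{-m_\lambda}\le \tfrac12$, so these eigenvalues cost at most $L^22^{\binom L2}$ in total. Every other eigenvalue has all nonzero eigenvectors of support $>L$ and costs $n\,O(L^{-1/2})$ in total by Erd\H{o}s. Taking $L=\lfloor\sqrt{\log_2 n}\rfloor$ makes the first term $O(\sqrt n\log n)$ and the second $O(n/(\log n)^{1/4})$. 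This is where the exponent $1/4$ comes from, and with this count your split by multiplicity and the pieces lemma become unnecessary.
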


We prove Theorem \ref{thm:asymptotic} in Section \ref{sec:asymptotic}. A similar argument also shows the following.

\begin{theorem}\label{thm:asymptotic_distinct}
Let $G$ be a graph with $d\ge 2$ distinct eigenvalues. Then there exists a switching
$\s\in\{\pm1\}^n$ such that
\[
M_*(\s)\ge  d - O\!\left(\frac{d}{(\log d)^{1/4}}\right).
\]
\end{theorem}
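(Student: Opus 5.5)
We take $\s$ uniformly at random from $\{\pm1\}^n$ and bound the expected number of distinct eigenvalues $\lambda\in\spec(G)$ with $E_\lambda\subseteq \s^\perp$. By \eqref{eq:switching_equivalence}, $\lambda$ is non-main for $G^{\s}$ exactly when $D_{\s}E_\lambda\subseteq\1^\perp$, i.e.\ when $E_\lambda\subseteq\s^\perp$. Hence it suffices to show that
\[
\mathbb{E}\bigl[\,d-M_*(\s)\,\bigr]=\sum_{\lambda\in\spec(G)}\Pr\bigl[E_\lambda\subseteq \s^\perp\bigr]=O\!\left(\frac{d}{(\log d)^{1/4}}\right).
\]
Some $\s$ then does at least as well as the expectation. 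For any fixed nonzero $\x\in E_\lambda$ we have $\Pr[E_\lambda\subseteq\s^\perp]\le \Pr[\s^\top\x=0]$. So the task is to find, for most $\lambda$, an eigenvector to which anti-concentration applies.

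Fix a threshold $k$, to be chosen later, and split $\spec(G)$ into two classes. A \emph{localized} eigenvalue is one for which $E_\lambda$ contains a nonzero vector with $|\supp(\x)|\le k$; all other eigenvalues are \emph{spread}.

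For a spread $\lambda$, take any nonzero $\x\in E_\lambda$; it has more than $k$ nonzero coordinates. Condition on the signs outside a set of $k$ of these coordinates. The Erd\H{o}s--Littlewood--Offord inequality then gives
\[
\Pr[\s^\top\x=0]\le \binom{k}{\lfloor k/2\rfloor}2^{-k}=O(k^{-1/2}).
\]
Summing over at most $d$ spread eigenvalues contributes $O(d/\sqrt k)$.

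For a localized $\lambda$, let $\x\in E_\lambda$ be nonzero with support $S$, $|S|=j\le k$. Restricting $A\x=\lambda\x$ to the rows indexed by $S$ gives $A(G[S])\,\x_S=\lambda\x_S$ with $\x_S\ne\0$. Hence $\lambda$ is an eigenvalue of an unsigned graph on $j\le k$ vertices. There are at most $2^{\binom{j}{2}}$ labelled graphs on $j$ vertices, each with at most $j$ eigenvalues. So the number of distinct localized eigenvalues is at most
\[
\sum_{j\le k} j\,2^{\binom j2}\le 2^{2k^2}.
\]
This count is independent of $n$, which is the key point of the argument. Trivially bounding each probability by $1$, localized eigenvalues contribute at most $2^{2k^2}$.

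Choose $k=\lfloor\tfrac12\sqrt{\log_2 d}\rfloor$, so that $2^{2k^2}\le d^{1/2}$. The total expectation is then
\[
O\!\left(d^{1/2}+\frac{d}{(\log d)^{1/4}}\right)=O\!\left(\frac{d}{(\log d)^{1/4}}\right),
\]
which proves the theorem; for small $d$ the statement holds trivially by adjusting the implied constant.

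The main obstacle, and the step I would check most carefully, is the localized case. One must make sure that the restriction argument genuinely forces $\lambda$ into the finite set of eigenvalues of graphs on at most $k$ vertices. One must also confirm that it does not matter whether other vectors of $E_\lambda$ are spread: we simply discard all localized $\lambda$, and use anti-concentration only when every nonzero vector of $E_\lambda$ has large support. The trade-off between $2^{2k^2}$ and $d/\sqrt k$ is exactly what produces the exponent $1/4$.
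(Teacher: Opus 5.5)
Your proof is correct and follows essentially the same route as the paper. Both arguments take a uniformly random switching and split $\spec(G)$ according to whether $E_\lambda$ contains a nonzero vector of support at most $k$. Spread eigenvalues are handled by the Erd\H{o}s--Littlewood--Offord bound $O(k^{-1/2})$, and restricting to $G[\supp(\x)]$ bounds the number of localized eigenvalues by a function of $k$ alone. The differences are cosmetic: you use the cruder count $2^{2k^2}$ with $k=\lfloor\frac12\sqrt{\log_2 d}\rfloor$ where the paper uses $L^2\,2^{\binom L2}$ with $L=\lfloor\sqrt{\log_2 d}\rfloor$, and you handle small $d$ explicitly, which the paper leaves implicit.
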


Note that these two asymptotic results are incomparable. 

\subsection{Further discussion}

Perhaps not surprisingly, Conjecture \ref{conj:switching_n_main_eigenvalues} remains open for regular graphs. Another interesting unresolved case is the class of trees. We believe these families merit special attention and a proof for these special cases may even lead to a complete proof of the full conjecture. We pose this as a problem.

\begin{problem}
 Show that the switching conjecture for main eigenvalues holds for trees and connected regular graphs of order $n\ge 3$.
\end{problem}

A worthwhile relaxation of Conjecture \ref{conj:switching_n_main_eigenvalues} is to allow all signatures instead of just signatures arising from switchings. 

\begin{conjecture}[Signature conjecture for main eigenvalues]\label{conj:signature}
For any unsigned connected graph $G$ of order $n\ge 3$, there exists a signature $\sigma$ such that all eigenvalues of the signed graph $(G,\sigma)$ are main.
\end{conjecture}

It is known that all signatures of a tree can be obtained through switchings \cite[Proposition 3.2]{Zaslavsky_1982}, and so the trees remain a difficult case even for this relaxed claim.

Finally, we observe that while the original switching conjecture asks for non-orthogonality with the all-ones vector $\1$, the conjecture can be easily generalized to any Rademacher vector. In particular, Conjectures \ref{conj:switching_n_main_eigenvalues} and \ref{conj:switching_n_main_eigenvectors} are equivalent to the following seemingly stronger claim.

\begin{conjecture} Fix a Rademacher vector $\b\in \{\pm 1\}^n$. For any unsigned connected graph $G \notin\{ K_2, K_4 - e\}$ of order $n$, there is a  switching $\s$ such that $G^{\s}$ has an orthonormal eigenbasis consisting of eigenvectors which are all non-orthogonal to $\b$. 
\end{conjecture}

This is because, if there is a switching $\s$ such that $G^{\s}$ has an eigenbasis of main vectors, then $G^{\s\circ \b}$ has an eigenbasis of vectors which are non-orthogonal to $\b$; here, $\s\circ \b$ denotes the entrywise product of $\s$ and $\b$. Thus, all of our results carry forward to this more general setup.

\section{Asymptotic results}
\label{sec:asymptotic}

In this section, we prove Theorems \ref{thm:asymptotic} and \ref{thm:asymptotic_distinct}.

Let $N(\s)$ denote the number of non-main eigenvalues of $G^{\s}$, i.e., 
\[ N(\s) := \sum_{\lambda\in \spec(G)} m_\lambda \mathbf{1}_{\{E_\lambda\subseteq \s^\perp\}} = n-M(\s).\]
To prove Theorem \ref{thm:asymptotic}, it suffices to show that there is a switching $\s\in \{\pm 1\}^n$ such that 
\[N(\s) = O\!\left(\frac{n}{(\log n)^{1/4}}\right).\]
To that end, it is sufficient to show that for a random switching $\s\in \{\pm 1\}^n$ chosen uniformly, the expected value
\begin{equation}\label{eq:expected_non_main_eigenvalues}
   \mathbb{E}(N(\s)) = O\!\left(\frac{n}{(\log n)^{1/4}}\right). 
\end{equation}

To prove \eqref{eq:expected_non_main_eigenvalues}, we will classify the non-main eigenvalues of $G^{\s}$ into two categories: eigenvalues with at least one eigenvector of small support and eigenvalues with all eigenvectors with large support. We bound the number of non-main eigenvalues in each category separately and then show that the expected sum of those bounds is $O\!\left(\frac{n}{(\log n)^{1/4}}\right).$  

Fix an integer $L\ge 1$ which will be determined later. Define
\[
        \Lambda_{L}
        :=
        \{\lambda\in\spec(G):\min\{|\supp(\x)|:0\ne \x\in E_\lambda\}\le L\},
\]
i.e., $\Lambda_{L}$ contains all (distinct) eigenvalues of $G^{\s}$ which have an eigenvector whose support contains at most $L$ vertices. Let \[\Lambda_L^c:= \spec(G)\backslash \Lambda_L.\] 
Then, it is clear that 
\[ \mathbb{E}(N(\s)) = \sum_{\lambda\in \Lambda_L} m_\lambda \mathbb{P}(\s\perp E_\lambda) + \sum_{\lambda\in \Lambda_L^c} m_\lambda \mathbb{P}(\s\perp E_\lambda).\]

We recall the following consequence of a result of Erd\H{o}s \cite{Erdos_1945} concerning the Littlewood-Offord problem, which found a place in the \emph{Proofs from the Book} \cite{Aigner_Ziegler_2018}.

\begin{theorem}[\cite{Erdos_1945, Aigner_Ziegler_2018}]\label{thm:Littlewood_Offord}
If $\x\in\mathbb{R}^n$ has at least $\ell$ nonzero coordinates, and $\s$ is a random switching, then
\[
        \mathbb{P}(\s^\top \x=0)
        \le
        2^{-\ell}\binom{\ell}{\lfloor \frac{\ell}{2}\rfloor}. 
\]
Equivalently, the number of Rademacher vectors $\s\in \{\pm 1\}^n$ orthogonal to $\x$ is at most $2^{n-\ell}\binom{\ell}{\lfloor \frac{\ell}{2}\rfloor}$.
\end{theorem}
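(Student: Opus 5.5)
We prove Theorem \ref{thm:Littlewood_Offord} by Sperner's theorem, in the form of Erd\H{o}s's argument. The first step is a reduction to the nonzero coordinates. Let $S=\supp(\x)$ with $|S|=k\ge \ell$. Fix the coordinates of $\s$ outside $S$, since they do not affect $\s^\top\x$. It then suffices to show that, among the $2^k$ choices of $(s_i)_{i\in S}$, at most $\binom{k}{\lfloor k/2\rfloor}$ satisfy $\sum_{i\in S}s_ix_i=0$. Summing over the $2^{n-k}$ outside choices gives probability at most $2^{-k}\binom{k}{\lfloor k/2\rfloor}$.

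Next we need this to be at most the bound for $\ell$. The map $k\mapsto 2^{-k}\binom{k}{\lfloor k/2\rfloor}$ is nonincreasing. Going from $k=2m$ to $2m+1$ it is unchanged, since $\binom{2m+1}{m}=2\binom{2m}{m}\cdot\frac{2m+1}{2(m+1)}\cdot\ldots$; more simply, $\binom{2m+1}{m}/2=\binom{2m}{m}\cdot\frac{2m+1}{2m+2}\le\binom{2m}{m}$. Going from $2m+1$ to $2m+2$ it is also unchanged, because $\binom{2m+2}{m+1}=2\binom{2m+1}{m}$. So $k\ge\ell$ gives the bound stated for $\ell$. The "equivalently" clause is just the count $2^n\cdot\mathbb{P}$.

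For the core step we normalize signs. Replacing $x_i$ by $-x_i$ and $s_i$ by $-s_i$ is a bijection on sign vectors that preserves $\sum s_ix_i$. We may therefore assume $x_i>0$ for all $i\in S$. Associate to each sign vector the set $T=\{i\in S: s_i=+1\}$. Then
\[
\sum_{i\in S}s_ix_i = 2\sum_{i\in T}x_i-\sum_{i\in S}x_i ,
\]
so a sign vector gives sum $0$ exactly when $\sum_{i\in T}x_i=\frac12\sum_{i\in S}x_i$.

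Finally we show these sets form an antichain. Suppose $T\subsetneq T'$ both give sum $0$. Then $\sum_{i\in T'\setminus T}x_i=0$, which contradicts positivity. So the family of such sets $T$ is an antichain in $2^S$. By Sperner's theorem it has size at most $\binom{k}{\lfloor k/2\rfloor}$.

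The main point is the sign normalization that turns the problem into an antichain; the monotonicity in $k$ is the only routine check.
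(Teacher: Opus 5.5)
Your proof is correct and is Erd\H{o}s's Sperner--antichain argument from the cited sources; the paper itself only cites the result and does not reprove it. You also correctly add the monotonicity of $2^{-k}\binom{k}{\lfloor k/2\rfloor}$ needed to pass from $|\supp(\x)|=k\ge\ell$ to the bound for $\ell$, with one slip: from $k=2m$ to $k=2m+1$ the quantity is not unchanged but strictly decreases by the factor $\frac{2m+1}{2m+2}$, which is what your displayed inequality shows and all that is needed.
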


We use the above result to estimate the number of non-main eigenvalues in $\Lambda_L^c$.

\begin{claim}\label{claim:large_support} We have 
\[\sum_{\lambda\in \Lambda_L^c} m_\lambda \mathbb{P}(\s\perp E_\lambda)\le n\, O\left(\frac{1}{\sqrt{L}}\right).\]
\end{claim}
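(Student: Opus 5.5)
For each $\lambda\in\Lambda_L^c$ I would bound the probability $\mathbb{P}(\s\perp E_\lambda)$ by the probability that $\s$ is orthogonal to a single, suitably chosen nonzero vector of $E_\lambda$, and then apply Theorem \ref{thm:Littlewood_Offord}. Concretely, fix $\lambda\in\Lambda_L^c$ and pick any nonzero $\x\in E_\lambda$. By the definition of $\Lambda_L$, every nonzero vector of $E_\lambda$ has support of size at least $L+1$, so in particular $|\supp(\x)|\ge L+1$. Since $\s\perp E_\lambda$ implies $\s^\top\x=0$, we get
\[
\mathbb{P}(\s\perp E_\lambda)\le \mathbb{P}(\s^\top \x = 0)\le 2^{-(L+1)}\binom{L+1}{\lfloor \frac{L+1}{2}\rfloor}.
\]

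Next I would invoke the standard central binomial estimate $2^{-\ell}\binom{\ell}{\lfloor \ell/2\rfloor}\le \sqrt{2/(\pi \ell)}\,(1+o(1))$, or simply $2^{-\ell}\binom{\ell}{\lfloor \ell/2\rfloor}\le 1/\sqrt{\ell}$ for all $\ell\ge1$, which can be checked directly from the product formula or via Stirling's approximation. With $\ell=L+1$ this gives $\mathbb{P}(\s\perp E_\lambda)=O(1/\sqrt{L})$, uniformly in $\lambda\in\Lambda_L^c$.

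Finally, I would sum over $\lambda\in\Lambda_L^c$ weighted by multiplicities:
\[
\sum_{\lambda\in\Lambda_L^c} m_\lambda \mathbb{P}(\s\perp E_\lambda)\le O\!\left(\tfrac{1}{\sqrt L}\right)\sum_{\lambda\in\spec(G)} m_\lambda = n\,O\!\left(\tfrac{1}{\sqrt L}\right),
\]
using that the multiplicities sum to $n$.

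There is no real obstacle here. The only point requiring care is that one vector in $E_\lambda$ suffices, and this is automatic because every nonzero vector of $E_\lambda$ has large support. The binomial bound is routine. (For Theorem \ref{thm:asymptotic_distinct}, the same argument without the weights $m_\lambda$ gives $d\,O(1/\sqrt L)$.)
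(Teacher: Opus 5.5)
Your proposal is correct and follows essentially the same route as the paper. Like the paper, you bound $\mathbb{P}(\s\perp E_\lambda)$ by $\mathbb{P}(\s^\top\x=0)$ for a single nonzero $\x\in E_\lambda$ with $|\supp(\x)|\ge L+1$, apply the Erd\H{o}s Littlewood--Offord bound together with the central binomial estimate, and sum the multiplicities to at most $n$.
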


\begin{proof}
    If $\lambda\in \Lambda_L^c$, it means that every (non-zero) eigenvector $\x\in E_\lambda$ has $|\supp(\x)|\ge L+1$. Pick a $\lambda$-eigenvector $\x$. Using Theorem \ref{thm:Littlewood_Offord}, we see that 
    \begin{align*}
        \sum_{\lambda\in \Lambda_L^c} m_\lambda \mathbb{P}(\s\perp E_\lambda)& \le \sum_{\lambda\in \Lambda_L^c} m_\lambda \mathbb{P}(\s^\top \x = 0)\\
        & \le \left(\sum_{\lambda\in \Lambda_L^c} m_\lambda\right) 2^{-(L+1)}\binom{L+1}{\lfloor \frac{L+1}{2}\rfloor} \\
        & = n \, O\left(\frac{1}{\sqrt{L}}\right),
    \end{align*}
    where the last inequality follows from Stirling's approximation. 
\end{proof}

Next, we estimate the number of non-main eigenvalues in $\Lambda_L$.

\begin{claim}\label{claim:small_support} We have
    \[\sum_{\lambda\in \Lambda_L} m_\lambda \mathbb{P}(\s\perp E_\lambda)\le L^2\, 2^{L\choose 2}. \]
\end{claim}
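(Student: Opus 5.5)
The plan is to bound each summand by a constant and then bound the number of summands, i.e.\ $|\Lambda_L|$, by a quantity depending only on $L$. The multiplicity $m_\lambda$ can be as large as order $n$ (think of $n/2$ disjoint copies of $K_2$ and $\lambda=1$), so the bound cannot come from $m_\lambda$ directly. It has to come from the fact that $\s\perp E_\lambda$ becomes exponentially unlikely in $m_\lambda$.

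\textbf{Step 1 (each summand is at most $1$).} Let $E$ be a subspace of $\mathbb{R}^n$ of dimension $m$. The event $\s\perp E$ means that $\s$ lies in $E^\perp$, which has dimension $k=n-m$. I will use the standard (Odlyzko-type) fact that a $k$-dimensional subspace $V$ of $\mathbb{R}^n$ contains at most $2^k$ vectors of $\{\pm1\}^n$.
\begin{itemize}
\item Choose $k$ coordinates $I$ such that the projection of $V$ onto $\mathbb{R}^I$ is injective. This is possible because some $k\times k$ minor of a basis matrix of $V$ is nonzero.
\item Every vector of $V$ is then determined by its entries on $I$.
\item Hence at most $2^k$ sign vectors lie in $V$.
\end{itemize}
This gives $\mathbb{P}(\s\perp E_\lambda)\le 2^{-m_\lambda}$, and therefore $m_\lambda\,\mathbb{P}(\s\perp E_\lambda)\le m_\lambda 2^{-m_\lambda}\le 1/2\le 1$.

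\textbf{Step 2 (counting $\Lambda_L$).} Take $\lambda\in\Lambda_L$ and a nonzero $\x\in E_\lambda$ with $S=\supp(\x)$ and $|S|\le L$. Restricting the equation $A\x=\lambda\x$ to the rows indexed by $S$ gives $A[S]\,\x_S=\lambda\,\x_S$, where $\x_S\neq\0$. So $\lambda$ is an eigenvalue of the induced subgraph $G[S]$, which is a graph on $k\le L$ vertices. Up to relabelling, such graphs are among the $2^{\binom{k}{2}}$ graphs on vertex set $\{1,\dots,k\}$, and each has at most $k\le L$ eigenvalues. Hence
\[
|\Lambda_L|\le \sum_{k=1}^{L} k\,2^{\binom{k}{2}}\le L\cdot L\,2^{\binom{L}{2}}=L^2\,2^{\binom{L}{2}}.
\]
Combining this with Step 1 gives the claim.

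\textbf{Main obstacle.} The main point is Step 1: controlling $m_\lambda\mathbb{P}(\s\perp E_\lambda)$ uniformly even though $m_\lambda$ may be unbounded. The sign-vectors-in-a-subspace bound handles this cleanly. Step 2 is routine once one notices that a small-support eigenvector forces $\lambda$ to be an eigenvalue of a small induced subgraph.
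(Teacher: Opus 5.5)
Your proposal is correct and follows essentially the same route as the paper. Counting sign vectors in $E_\lambda^\perp$ gives $\mathbb{P}(\s\perp E_\lambda)\le 2^{-m_\lambda}$, so each summand is at most $1/2$, and $|\Lambda_L|$ is bounded by counting eigenvalues of graphs on at most $L$ vertices via $G[\supp(\x)]$; the only difference is that you also prove the sign-vector counting fact, which the paper cites as well known.
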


\begin{proof}
    Consider a (non-zero) eigenvector $\x\in E_\lambda$ with minimum support. Since $\lambda \in \Lambda_L$, we see that $|\supp(\x)|\le L$. It is clear that $\lambda$ is an eigenvalue of the induced subgraph $G[\supp(\x)]$ with eigenvector $\x|_{\supp(\x)}$ (restriction of $\x$ to $\supp(\x)$). In particular, this means that $\lambda$ is an eigenvalue of some graph on at most $L$ vertices. We see that $|\Lambda_L|\le |\mathcal{E}(L)|$, where 
    \[\mathcal{E}(L):=\{\lambda \in \mathbb{R}: \lambda \text{ is an eigenvalue of some graph on at most $L$ vertices} \}.\] 

    Furthermore, it is well-known that a space of dimension $k$ contains at most $2^k$ Rademacher vectors. Thus, the number of Rademacher vectors in the space $E_{\lambda}^\perp$ is at most $2^{n-m_\lambda}$. This implies
    \[ \mathbb{P}(\s\perp E_\lambda)
        \le
        2^{-m_\lambda}.\]
    Using the above observations, we conclude that 
    \begin{align*}
        \sum_{\lambda\in \Lambda_L} m_\lambda \mathbb{P}(\s\perp E_\lambda) & \le  \sum_{\lambda\in \Lambda_L} m_\lambda\, 2^{-m_\lambda}\\
        & \le \frac{1}{2}|\Lambda_L|\\
        & \le \frac{1}{2}|\mathcal{E}(L)|\\
        & < \sum_{k=1}^L k\,2^{k\choose 2}\\
        & \le L^2\, 2^{L\choose 2}. 
    \end{align*}
The second last inequality above holds because there are at most $2^{k\choose 2}$ labelled graphs on $k$ vertices, each of which can have at most $k$ distinct eigenvalues. This completes the proof of the claim.
\end{proof}

Using Claims \ref{claim:large_support} and \ref{claim:small_support} we conclude that 
\[\mathbb{E}(N(\s)) \le n\, O\left(\frac{1}{\sqrt{L}}\right) + L^2\, 2^{L\choose 2}. \]
Since $L$ was arbitrary, we take $L=\left\lfloor \sqrt{\log_2 n}\right\rfloor,$ which implies
\[\mathbb{E}(N(\s)) \le O\left(\frac{n}{(\log n)^{1/4}}\right) + O
(\sqrt{n} \log n) =   O\left(\frac{n}{(\log n)^{1/4}}\right).\]
The proof of Theorem \ref{thm:asymptotic} is complete.

One can repeat the above argument with 
\[N_*(\s):= \sum_{\lambda\in \spec(G)} \mathbf{1}_{\{E_\lambda\subseteq \s^\perp\}} =d-M_*(\s)\]
to prove Theorem \ref{thm:asymptotic_distinct}. Here, $d = |\spec(G)|\ge 2$. Indeed, it is seen that 
\[\sum_{\lambda\in \Lambda_L^c} \mathbb{P}(\s\perp E_\lambda)\le d\, O\left(\frac{1}{\sqrt{L}}\right)\]
and 
 \[\sum_{\lambda\in \Lambda_L} \mathbb{P}(\s\perp E_\lambda)\le L^2\, 2^{L\choose 2}. \]
Thus,
\[\mathbb{E}(N_*(\s)) \le d\, O\left(\frac{1}{\sqrt{L}}\right) + L^2\, 2^{L\choose 2}. \]
Choosing $L=\left\lfloor \sqrt{\log_2 d}\right\rfloor$ gives the desired result Theorem \ref{thm:asymptotic_distinct}.

\section*{Acknowledgements}

Bojan Mohar is supported in part by the NSERC Discovery Grant R832714 (Canada), by the ERC Synergy grant (European Union, ERC, KARST, project number 101071836), and by the Research Project N1-0218 of ARIS (Slovenia). The research visit of Saieed Akbari at Simon Fraser University was supported in part by the ERC Synergy grant (European Union, ERC, KARST, project number 101071836). The authors thank Thom\'{a}s Jung Spier for helpful comments.

\section*{Declaration of AI use}

The authors acknowledge the use of ChatGPT (GPT-5.5, OpenAI; accessed July 2026) solely for preliminary brainstorming and the exploration of possible proof strategies. AI tools were not used to draft the manuscript. All formal statements, arguments, and proofs in the manuscript were written and checked by the authors, who take full responsibility for the accuracy and integrity of the article.

\bibliographystyle{plain}
\bibliography{switching_references}

\vspace{0.4cm}

\affl{Saieed Akbari}{s\_akbari@sharif.edu}{Department of Mathematical Sciences, Sharif University of Technology, Tehran, Iran}

\affl{Hitesh Kumar}{hitesh.kumar.math@gmail.com, hitesh\_kumar@sfu.ca}{Department of Mathematics, Simon Fraser University, Burnaby, Canada}

\affl{Bojan Mohar}{mohar@sfu.ca}{Department of Mathematics, Simon Fraser University, Burnaby, Canada\\On leave from FMF, Department of Mathematics, University of Ljubljana.}
 
\affl{Shivaramakrishna Pragada}{shivaramakrishna\_pragada@sfu.ca}{Department of Mathematics, Simon Fraser University, Burnaby, Canada}
\end{document}